\pdfoutput=1

\documentclass[oneside]{article}   	
\usepackage{geometry}                		
\geometry{letterpaper,margin=1.5in}                   		
\usepackage{graphicx}				

\usepackage{longtable}
\usepackage{array}
\usepackage{hyperref}
\usepackage{amssymb}
\usepackage{amsmath}
\usepackage{physics}    
\usepackage{tikz-cd}    
\usepackage{graphicx}
\usepackage{mathtools}
\usepackage{amsthm}
\usepackage{enumerate}
\usepackage{bbm}
\usepackage{subfiles}
\usepackage{xcolor}

\newcommand{\R}{\mathbb{R}}

\newcommand{\Z}{\mathbb{Z}}

\newcommand{\Prob}{\mathbb{P}}
\newcommand{\Expe}{\mathbb{E}}

\newcommand{\calG}{\mathcal{G}}

\newcommand{\calM}{\mathcal{M}}

\newcommand{\calP}{\mathcal{P}}

\newcommand{\rmx}{\mathrm{x}}

\newcommand{\bigA}{\mbox{\textit{\Large A}}}
\newcommand{\rvline}{\hspace*{-\arraycolsep}\vline\hspace*{-\arraycolsep}}

\DeclareMathOperator{\GL}{GL}

\DeclareMathOperator{\Var}{Var}
\DeclareMathOperator{\Cov}{Cov}

\theoremstyle{plain}
\newtheorem{theorem}{Theorem}[section]
\newtheorem{lemma}[theorem]{Lemma}
\newtheorem{corollary}[theorem]{Corollary}

\newtheorem{proposition}[theorem]{Proposition}

\theoremstyle{definition}

\newtheorem*{problem*}{Problem}

\theoremstyle{remark}

\title{Asymptotics of the partial \(n\)-fold dimer model} 
\author{Christina Meng\footnote{Department of Mathematics, Yale University}}
\date{September 28, 2025}							

\begin{document}

\maketitle


\begin{abstract}

We study a model of colored multiwebs, which generalizes the dimer model to allow each vertex to be adjacent to \(n_v\) edges. These objects can be formulated as a random tiling of a graph with partial dimer covers. We examine the case of a cycle graph, and in particular we describe the local correlations of tiles in this setting.

\end{abstract}


\section{Introduction}

Consider the scenario of a course in which each student has to sit an exam consisting of \(n\) problems, which are chosen from a collection of \(N\) problems written by the professor. Suppose the course has \(m\) students and \(k\) teaching assistants, and the problems are assigned using the following procedure. Each TA chooses a subset of \(l\) problems and then partitions them amongst the students, so that in the end each student receives \(n\) distinct problems. In how many ways can such an assignment be done?
In this paper we study a version of the dimer model that will answer this question. 

The dimer model constitutes the study of probability spaces of dimer covers, or perfect matchings, on a graph.
This statistical mechanics model has proved to have a multitude of interesting connections to other areas of mathematics.
A classical result of Kasteleyn is the following: one can enumerate weighted dimer covers of a planar graph by computing the determinant of the associated Kasteleyn matrix \cite{K63}. 

These \(n\)-fold dimer covers, or \(n\)-multiwebs, recently appeared in the representation theory literature in \cite{FLL19}, and works \cite{DKS24} and \cite{KS23} make sense of the \(n\)-fold model where the appropriate analogue to edge weights on graphs are \(\GL_n\)-connections.
The results above are specific to planar graphs, and little is known once we relax the planarity condition. 
A variant of the model was introduced in \cite{KP22}, allowing for asymptotic analysis.

The objects of interest in this paper are \((n_v)_v\)-multiwebs with edges colored with \(N\) colors for \(N\geq n_v\). We study multiwebs with \(N\) large as random objects. 
In Section \ref{sec:cnmultiwebs} we will define the model, and present an equivalent formulation of it as tilings using partial dimer covers, with probability measure coming from some initial tile weights. 
We apply the relevant tools from \cite{KP22}, which studied the related, yet distinct, multinomial tiling model.
We then compute the growth of the partition function in the limit as the vertex multiplicities \(n_v\) and the number of colors \(N\) become large (Section \ref{subsec:growthrate}). In this limit we also obtain a Gaussian approximation of the number of occurrences \(X_t\) of a tile of type \(t\) in a random multiweb. In particular, the covariance matrix of \(\vb{X} = (X_t)_t\) can be expressed in terms of linear maps depending on the data of the graph as well as the critical weight function on tiles, which come from a specific measure-preserving transformation of the original weights (Section \ref{subsec:Xt}).

To study the asymptotic version of the model, we specify vertex densities \(\alpha_v\) at each vertex \(v\) such that the fraction \(\frac{n_v}{N}\) of tiles covering \(v\) tends to \(\alpha_v\) as \(n_v, N\to \infty\).
In the case of an odd-length cycle graph with uniform vertex densities, we study the model somewhat concretely. Proposition \ref{prop:redtilingpoly} presents a closed-form expression for the reduced tiling polynomial of a cycle graph.
There turns out to be a unique critical vertex density for which the critical tile weights are uniform. 
Theorem \ref{thm:alphahat} computes this density, and demonstrates that it maximizes the growth rate of the partition function with respect to the growth of \(n_v\) and \(N\). 
Fixing the density to be critical, we compute the distribution of \(\vb{X}\). 
We have concrete expressions for the mean and covariance; Figure \ref{fig:L=9covX} is a numerical picture of the covariance of \(\vb{X}\) for the cycle graph with nine vertices. 

\begin{figure} 
\centering
\includegraphics[scale=0.6]{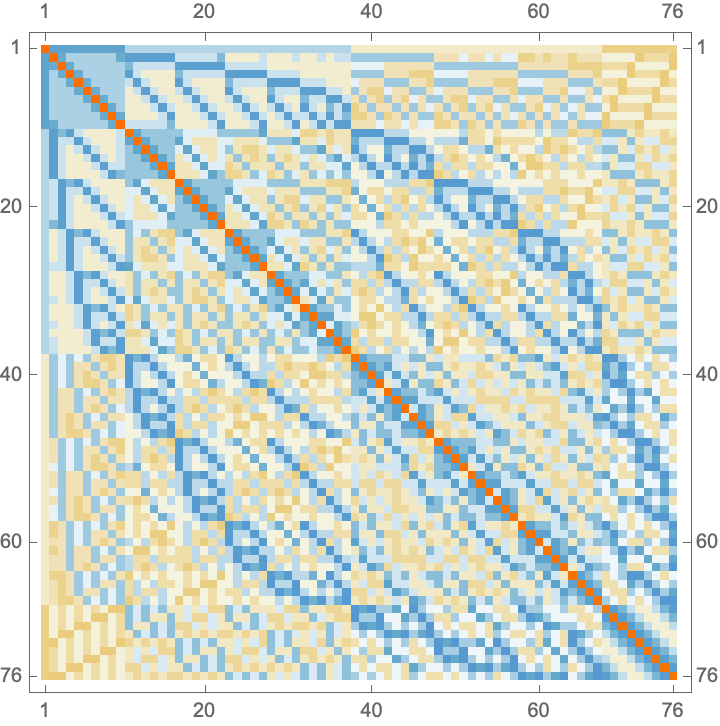}
\caption{Scaled covariance matrix \(\frac{1}{N} \Cov(\vb{X})\) for \(L=9\) cycle graph, with colors from blue to white to orange indicating negative, zero, positive values, respectively. The tiles are ordered by their size and then lexicographically in the vertex indices.}
\label{fig:L=9covX}
\end{figure}

We also describe the local correlations of tiles on a small segment of the cycle graph. Specifically, we partition the set of tiles according to which edges in a fixed window of five vertices they contain, and we examine the number of occurrences \(S_j\) of each equivalence class \(j\) in a multiweb. We then describe the joint distribution of \(\vb{S} = (S_j)_j\) in the limit; Figure \ref{fig:L=31covS} is a numerical picture for the covariance matrix of \(\vb{S}\) for the cycle graph with 31 vertices. Finally, we observe that there is a limiting mean and variance of \(S_j\) as the size of the graph \(L\) tends to infinity (Section \ref{subsec:limlocal}).

\begin{figure} 
\centering
\includegraphics[scale=0.4]{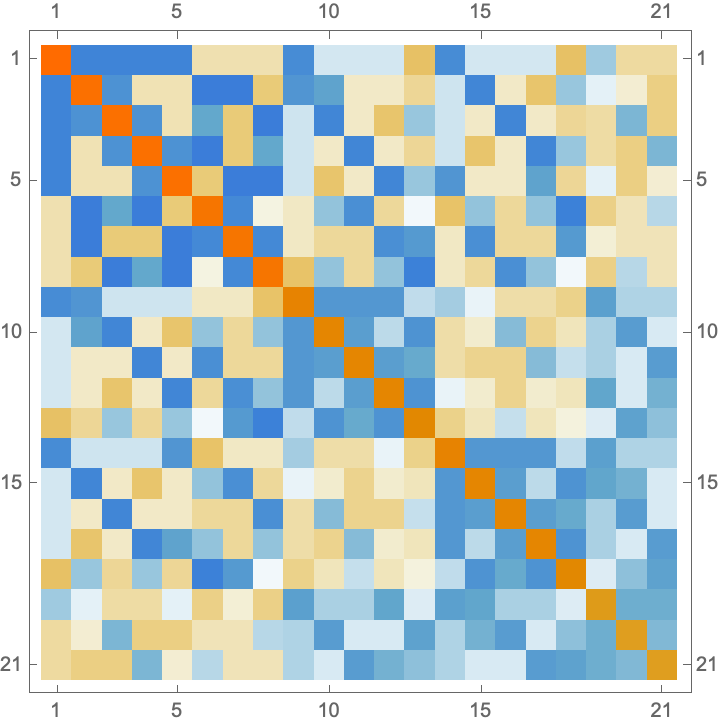} 
\caption{Scaled covariance matrix \(\frac1N \Cov(\vb{S})\) for \(L=31\) cycle graph.  }
\label{fig:L=31covS}
\end{figure}

\paragraph{Acknowledgements.}

We thank Richard Kenyon for helpful discussions. This research was supported by NSF grant DMS-1940932 and the Simons Foundation grant 327929.


\section{Colored multiwebs} \label{sec:cnmultiwebs}

In this section we define the model and compute the exponential growth rate of the partition function. We also describe the distribution of tile multiplicities in a random multiweb.

Let \(\calG = (V,E)\) be a finite graph. Fix nonnegative integer multiplicities \(\vb{n} = (n_v)_{v}\) at each vertex \(v\), and suppose we have \(N\) colors. Write \([N]\) for the set of colors, and we use \(\calP(\cdot)\) to denote a power set. A \textit{colored \(\vb{n}\)-multiweb} is an assignment of subsets of colors to each edge \(m\colon E \to \calP([N])\) such that no colors are repeated among edges sharing an endpoint, and such that the total number of colors incident to each vertex \(v\) is \(n_v\). That is to say, we require that \(m(u_1 v) \cap m(u_2 v) = \varnothing\) for vertices \(u_1\neq u_2\) and \(\sum_{u \sim v} \abs{m(uv)} = n_v\) for each vertex \(v\). This definition of colored multiweb is more general than the one in \cite{KS23}, which requires that every vertex have multiplicity \(n\) and there be exactly \(n\) colors. Write \(\Omega_{\vb{n},N}(\calG)\) for the set of colored \(\vb{n}\)-multiwebs. 

A \textit{partial dimer cover}, or a \textit{partial matching}, of \(\calG\) is a subset of edges that covers each vertex either zero or one time(s). We obtain a \textit{dimer cover} in the special case where every vertex is covered exactly once. 
By the disjointness condition above, each color defines a partial matching on \(\calG\). Precisely, if we fix \(m\in \Omega\), the subset of edges \(\{e: k\in m(e)\}\) is a partial matching for any color \(k\). Let the set of tiles \(T = T(\calG) \subset \calP(E)\) be the set of partial matchings of \(\calG\). Then, we can alternatively define a colored \(\vb{n}\)-multiweb as \(m\colon[N]\to T\) satisfying the condition that each vertex \(v\) is covered \(n_v\) times. 
Here we use the term "tiles" somewhat unconventionally since they are not necessarily connected and the "tilings" have some multiplicity at each site.


The tiling formulation of our model is amenable to being studied via the techniques of \cite{KP22}.  
Our model is similar to theirs in that valid configurations are those with the correct vertex multiplicities, and the measure comes from a tile weight function. However, the multinomial model considers tilings of a "blowup graph", and there is no notion of colors, so in particular the tiles comprising a tiling are not labelled or ordered in any way.
In our case, increasing \(N\) increases the entropy of the measure we will define below.
In what follows we will review the relevant results of \cite{KP22}, specifying how with slight modifications, their results apply to our model. 

\subsection{Probability measure} \label{subsec:probmeas}

Introduce a weight function on tiles \(w\colon T \to \R_{>0}\). Let \(m_t\) be the number of times a tile of type \(t\) appears in a multiweb \(m\), and define the weight of \(m\) to be \(w(m) = \prod_t w(t)^{m_t}\). Now, the measure \(\Prob = \Prob_{w, \vb{n}, N}\) we would like to study is the one satisfying \(\Prob(m) \propto w(m)\). The partition function \(Z = Z_{w, \vb{n},N}(\calG)= \sum_{m\in \Omega}w(m)\) is the required normalization factor, and so \(\Prob(m) = \frac{w(m)}{Z}\). 

Next, index a collection of variables by the vertices: \(\vb*{x} = (x_1, \dots, x_V)\); here, we also use \(V\) to refer to the cardinality of this set.
Let \(t_v\) be \(1\) if vertex \(v\) is covered by \(t\) and zero otherwise.
The \textit{tiling polynomial} is
\[P(\vb*{x}) = \sum_{t\in T} w(t) \prod_{v} x_v^{t_v}.\]
Later it will be convenient to allow a vertex to occur in a tile with some multiplicity \(t_v \in \Z_{\geq 0}\), in which case the tiling polynomial is still defined as above. 

Let \(\vb*{x}^{\vb{n} }= \prod_v x_v^{n_v} \). One can then interpret the \(N\)-th power of \(P(\vb*{x})\) as the generating function for all tilings with \(N\) partial matchings:
\[\sum_{\vb{n} \geq \vb{0}} Z_{w, \vb{n},N} \vb*{x}^{\vb{n}} = \pqty{P(\vb*{x})}^N,\]
where the sum is over all nonnegative multiplicity vectors. 
Now, the coefficient of the \(\vb*{x}^{\vb{n}}\) term is precisely the desired partition function:
\[Z_{w,\vb{n},N} = [\vb*{x}^{\vb{n}}] \pqty{P(\vb*{x})}^N.\]
Note it is possible for \(\Omega_{\vb{n},N}\) to be empty for some \(\vb{n}\), in which case \(Z_{w, \vb{n},N}\) would be zero; we consider which multiplicities are feasible in Section \ref{subsec:growthrate}.

To discuss the asymptotics of the model, we will need the following notion of equivalence between tile weight functions. 
Two weight functions \(w, w'\colon T\to \R_{>0}\) are \textit{gauge equivalent} if there exists a function \(f\colon V\to \R_{>0}\) such that \(w'(t) = w(t) \prod_{v\in t} f(v)\). Such functions \(f\) are called \textit{gauge transformations}. 
For the lemma below, the exact same proof as that of Lemma 2.1 in \cite{KP22} works.

\begin{lemma}[\cite{KP22}]
Gauge equivalent tile weights give the same probability measure on \(\Omega_{\vb{n},N}\).
\end{lemma}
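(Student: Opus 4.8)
The plan is to show that the gauge factor, once accumulated over all the tiles making up a multiweb, contributes only an overall constant on the configuration space $\Omega_{\vb{n},N}$, and therefore cancels between the unnormalized weight of a configuration and the partition function. Fix a gauge transformation $f\colon V\to\R_{>0}$ realizing the equivalence, so that $w'(t)=w(t)\prod_{v\in t}f(v)$. Given $m\in\Omega_{\vb{n},N}$ with tile multiplicities $(m_t)_t$, I would expand the weight of $m$ under $w'$ and regroup the product by vertices:
\[
w'(m)=\prod_{t}w'(t)^{m_t}=\prod_{t}w(t)^{m_t}\cdot\prod_{t}\,\prod_{v\in t}f(v)^{m_t}=w(m)\cdot\prod_{v}f(v)^{\,\sum_{t\colon v\in t}m_t}.
\]

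The key step is to evaluate the exponent $\sum_{t\colon v\in t}m_t$. This counts the number of tiles in the multiweb $m$ that cover the vertex $v$, which is exactly $n_v$ by the defining condition of an $\vb{n}$-multiweb in its tiling formulation (each vertex $v$ is covered $n_v$ times). Hence $w'(m)=C\,w(m)$ with $C=\prod_v f(v)^{n_v}$ a positive constant independent of $m$. Summing over $\Omega_{\vb{n},N}$ then gives $Z_{w',\vb{n},N}=C\,Z_{w,\vb{n},N}$, and the constant cancels:
\[
\Prob_{w',\vb{n},N}(m)=\frac{w'(m)}{Z_{w',\vb{n},N}}=\frac{C\,w(m)}{C\,Z_{w,\vb{n},N}}=\Prob_{w,\vb{n},N}(m).
\]

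I do not anticipate a genuine obstacle here; this is the same argument as Lemma 2.1 of \cite{KP22} transported to the colored setting. The only point that requires a little care is the bookkeeping that turns the product of per-tile gauge contributions $\prod_{v\in t}f(v)$ into $\prod_v f(v)^{n_v}$, together with the observation that what makes the cancellation work is precisely that \emph{every} configuration in $\Omega_{\vb{n},N}$ has the same vertex multiplicities, so the gauge constant $C$ is common to all of them and in particular appears in $Z$. The same computation also covers the later extension in which a tile may cover a vertex with multiplicity $t_v\in\Z_{\geq0}$: one replaces $\prod_{v\in t}f(v)$ by $\prod_v f(v)^{t_v}$, so the exponent of $f(v)$ in $w'(m)/w(m)$ becomes $\sum_t t_v m_t=n_v$, and the conclusion is unchanged.
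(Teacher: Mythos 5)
Your proof is correct and is exactly the argument the paper invokes (it cites the proof of Lemma 2.1 of \cite{KP22} verbatim): the gauge factor aggregates to the configuration-independent constant \(\prod_v f(v)^{n_v}\) because every \(m\in\Omega_{\vb{n},N}\) has the same vertex multiplicities, so it cancels against the partition function. No issues.
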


\subsection{Tiling Laplacian} \label{subsec:tilinglaplacian}

We define a Laplacian operator associated to the tiling problem. Firstly, for a set \(X\) we use exponential notation \(\R^X\) to refer to the real linear space with basis \((e_x)_{x\in X}\) (in some arbitrary fixed order).
The \textit{incidence map} \(D\colon \R^T \to \R^V\) of the tiling problem is the linear map \(D(e_t) = \sum_{v\in t} t_v e_v\). 
The matrix \(D = (D_{v,t})_{v,t}\) representing this linear map with respect to \((e_v)_v\) and \((e_t)_t\) is called the \textit{incidence matrix}. 
Next, identifying each basis element \(e_v\) with its dual gives \(\R^V \cong (\R^V)^*\), and similarly \(\R^T \cong (\R^T)^*\).
Then, the transpose of the incidence map is a linear map \(D^*\colon \R^V\to \R^T\).
We also define \(C\colon\R^T \to \R^T\) as the map which scales each basis element by the corresponding tile weight, i.e. \(C(e_t) = w(t) e_t\).
Then, the \textit{tiling Laplacian} \(\Delta\colon\R^V \to \R^V\) is the composition \(\Delta = DCD^*\). 

In general the incidence map \(D\) is neither injective nor surjective. Its domain and codomain decompose orthogonally as \(\R^V\cong \Im(D)\oplus \ker(D^*)\) and \(\R^T \cong \Im(D^*) \oplus \ker(D)\). 
Then, the restrictions \(D\colon \Im(D^*)\to \Im(D)\) and \(D^*\colon \Im(D)\to \Im(D^*)\) are isomorphisms, and so \(\Delta\colon\Im(D)\to \Im(D)\) is an isomorphism as well. We note that \(\ker(D^*)\) is the set of gauge transformations which leave tile weights unchanged, in the sense that having \(\prod_v f(v)^{t_v} = 1\) for every tile \(t\) is equivalent to satisfying \(D^\top \log f = \vec{0}\).

\subsection{Fixed tile sizes} \label{subsec:tilesizes}

It will be convenient to work with a homogeneous tiling polynomial since it simplifies our calculations in Section \ref{sec:excycle}. Let \(s(t)\) be the size of the partial matching \(t\), which we define to be the number of edges, or dimers, constituting \(t\). For each \(t\), multiply the corresponding monomial in \(P(\vb*{x})\) by \(x_0^{V-2s(t)}\). The resulting polynomial \(\hat{P}(\hat{\vb*{x}}) = \hat{P}(x_0,\dots, x_V)\) is then homogeneous of degree \(V\). 

It is still possible to interpret \(\hat{P}(\hat{\vb*{x}})\) as a tiling polynomial. The corresponding tiling problem has fixed tile sizes, and we claim that it is equivalent to the original problem. Consider a modified graph \(\hat{\calG}\) which has an additional zero vertex \(v_0\) and no additional edges; now, \(x_0\) is the vertex variable corresponding to \(v_0\). Viewing a tile as a multiset of vertices, we have a bijection \(h \colon T\to \hat{T}\) to the set of homogenized tiles given by 
\[t\mapsto t\cup \{\underbrace{v_0, \dots, v_0}_{V-2s(t)}\}.\]
Furthermore, since every tile is a multiset of \(V\) vertices, we have
\(NV = n_0 + \sum_v n_v\).
Thus, the multiplicity \(n_0\) of \(v_0\) is determined by \(n_1,\dots, n_V\). 
In summary, the mapping \(h\) induces a measure-preserving bijection \(\Omega_{\vb{n},N}(\calG)\to \hat{\Omega}_{\hat{\vb{n}},N}(\hat{\calG})\).
We will always assume we have equal tile sizes from here on, so we drop the hat notation as there is no ambiguity.

\subsection{Growth rate of partition function and critical gauge} \label{subsec:growthrate}

It is possible to approximate the partition function for large \(n_v\) and \(N\). As these parameters become large we would like the sequence of spaces of multiwebs \(\Omega_{\vb{n},N}\) to be nonempty for each \(n_v\) and \(N\).  
Specifically, if for a fixed number of colors \(N\) there exists a colored multiweb with vertex multiplicities \(\vb{n} \in \pqty{\Z_{\geq 0}}^V\), we refer to \(\vb{n}\) as \textit{feasible} with respect to this \(N\).
Define the affine subspace \(H_N = \{(s_1,\dots, s_T) \in \R^T: \sum_i s_i = N \}\). 
In terms of the incidence map \(D\), the set of feasible multiplicities when there are \(N\) colors is \(\calM_{\Z}^N \coloneq D\pqty{(\Z_{\geq 0})^T \cap H_N}\).

Now, fix a density \(\alpha_v\) at each vertex. Let \(n_v\to \infty\) and \(N\to \infty\) simultaneously, such that each multiplicity vector \(\vb{n}\) lies in \(\calM^N_\Z\) and \(\frac{n_v}{N} \to \alpha_v\). Then, \(\alpha_v\) is the asymptotic fraction of tiles covering \(v\). 
Again, not all choices of vertex densities \(\vb*{\alpha} = (\alpha_v)_v\) are possible. We say \(\vb*{\alpha} \) is \textit{feasible} if it is in \(\calM_\R \coloneq D\pqty{(\R_{> 0})^T \cap H_1 }\).
While in general the fractional multiplicities \(\pqty{\frac{n_v}{N}}_v\) lie in \(  D\pqty{(\R_{\geq 0})^T \cap H_1 }\) when \(\vb{n}\) is feasible, we will only consider \(\vb*{\alpha}\) in the interior. For \(\vb*{\alpha}\) on the boundary, the associated probability measure on tiles (coming from the critical tile weights described in Theorem \ref{thm:critgauge} below) is still well defined, but it degenerates in the sense that certain tiles occur with probability zero so we exclude this case.

The growth of the partition function \(Z\) is described by its \textit{exponential growth rate} \(\sigma = \sigma(w,\vb*{\alpha})\) in the sense that asymptotically,
\[Z \sim e^{\sigma N+ o(N)} \qas n_v, N\to \infty. \]
A theorem of \cite{KP22} presents an expression for the growth rate in terms of the tiling polynomial and a specific gauge transformation. 
We may apply the theorem in our case since for both models the partition function can be expressed in terms of a coefficient of a power of the tiling polynomial, and the proof largely consists of manipulating this polynomial.
However, the differences in the two models result in different combinatorics, so we have a modified expression for the growth rate in equation (\ref{eqn:growthratesigma}) below.

\begin{theorem}[\cite{KP22}] \label{thm:critgauge}
For any \(\vb*{\alpha} \in \calM_\R\) and weight function \(w\) there is a unique gauge-equivalent weight function \(w'\) with the property that for all \(v\) the sum of weights of tiles containing vertex \(v\) (counted with multiplicity) is \(\alpha_v\), that is 
\(\sum_{t} t_v w'(t) = \alpha_v\).
The corresponding gauge transformations \(f\colon V\to \R_{>0}\) are exactly those which solve the "criticality equations" 
\begin{equation} \label{eqn:crit}
\frac{x_v \partial_{x_v} P}{P} = \alpha_v
\end{equation}
with \(x_v = f(v)\). The growth rate of \(Z_{w,\vb{n},N}\) is 
\begin{equation}\label{eqn:growthratesigma}
\sigma(w, \vb*{\alpha}) \coloneq \lim_{n_v, N\to \infty} \frac1N \log Z_{w,\vb{n},N} = \log P(\vb{x}) - \sum_v \alpha_v \log(\rmx_v) 
\end{equation}
for any solution \(\vb{x}\) to (\ref{eqn:crit}).
\end{theorem}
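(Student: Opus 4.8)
The plan is to follow the argument of \cite{KP22}, the only substantive change being the combinatorial identity that expresses $Z$ as a coefficient of $P(\vb*{x})^N$ rather than of the exponential-type generating function of the unordered multinomial model. Write $x_v = e^{y_v}$ and set
\[
F(\vb*{y}) \coloneq \log P\pqty{e^{y_1},\dots,e^{y_V}} - \sum_v \alpha_v\, y_v .
\]
Since $P(e^{\vb*{y}}) = \sum_{t} w(t)\exp\pqty{\sum_v t_v y_v}$ is a positive combination of exponentials of linear forms, $\log P$ is convex, hence so is $F$; the chain rule gives $\partial_{y_v} F = \tfrac{x_v\partial_{x_v}P}{P} - \alpha_v$, so the critical points of $F$ are exactly the solutions of (\ref{eqn:crit}), and by convexity these are precisely the global minimizers of $F$. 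If $\vb*{x}$ solves (\ref{eqn:crit}) then $x_v\partial_{x_v}P(\vb*{x}) = \alpha_v P(\vb*{x})$, so (using that, after homogenization, all tiles have the same total size $\sum_v t_v = V$, and $\sum_v\alpha_v = V$) the normalized gauge transformation $f(v) = x_v P(\vb*{x})^{-1/V}$ has $P(f)=1$ and produces $w'(t) = w(t)\prod_v f(v)^{t_v}$ with $\sum_t t_v w'(t) = \alpha_v$. It therefore suffices to prove that $F$ attains its minimum and that the minimizer is unique modulo the subspace of directions along which $F$ is constant.

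For existence I would show $F$ is coercive transverse to its invariant directions. Since $\vb*{\alpha}\in\calM_\R = D\pqty{(\R_{>0})^T\cap H_1}$ we may write $\vb*{\alpha} = \sum_t c_t\,\vb*{t}$ with all $c_t>0$ and $\sum_t c_t = 1$, where $\vb*{t}\coloneq D(e_t)\in\R^V$. For any direction $\vb*{y}$ one has $\log P(e^{s\vb*{y}})\sim s\max_t\lrangle{\vb*{t},\vb*{y}}$ as $s\to\infty$ (all weights are positive), while $\sum_v\alpha_v y_v = \sum_t c_t\lrangle{\vb*{t},\vb*{y}}\le\max_t\lrangle{\vb*{t},\vb*{y}}$, with equality iff $\lrangle{\vb*{t},\vb*{y}}$ is independent of $t$; hence $F(s\vb*{y})\to+\infty$ unless $\vb*{y}\in W^\perp\coloneq\{\vb*{y}:\lrangle{\vb*{t}-\vb*{t}',\vb*{y}}=0\text{ for all }t,t'\}$, along which $F$ is constant (using $\lrangle{\vb*{\alpha},\vb*{y}}=\lrangle{\vb*{t},\vb*{y}}$ there, valid because $\vb*{\alpha}\in\Im(D)$). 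So $F$ descends to a coercive convex function on $\R^V/W^\perp$ and attains its minimum. For uniqueness, the Hessian of $\log P$ at $\vb*{y}$ is the covariance matrix of $\vb*{t}$ under the probability measure on $T$ proportional to $w(t)e^{\lrangle{\vb*{t},\vb*{y}}}$; since all weights are positive this measure has full support, so the Hessian has kernel exactly $W^\perp$ and $F$ is strictly convex transverse to $W^\perp$. Hence the minimizer is unique modulo $W^\perp = \ker(D^*)\oplus\R\vb*{1}$ (the extra line appearing because $\lrangle{\vb*{t},\vb*{1}} = V\neq0$): the $\ker(D^*)$ part of the ambiguity leaves $w'$ unchanged, and the $\R\vb*{1}$ part is pinned by the normalization $P(f)=1$. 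This gives the uniqueness of $w'$ and the stated description of the gauges solving (\ref{eqn:crit}).

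For the growth rate, recall $Z_{w,\vb{n},N} = [\vb*{x}^{\vb{n}}]\,P(\vb*{x})^N$. Extracting this coefficient by a Cauchy integral over the torus $\abs{x_v}=r_v$ and using $\abs{P(\vb*{x})}\le P(\vb*{r})$ there gives $Z_{w,\vb{n},N}\le \vb*{r}^{-\vb{n}}P(\vb*{r})^N$, so $\tfrac1N\log Z_{w,\vb{n},N}\le \log P(\vb*{r})-\sum_v\tfrac{n_v}{N}\log r_v\to F(\log\vb*{r})$; minimizing over $\vb*{r}$ yields $\limsup\tfrac1N\log Z_{w,\vb{n},N}\le \min F = F(\log\vb*{x})$ for any solution $\vb*{x}$ of (\ref{eqn:crit}), which is the claimed value. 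For the matching lower bound I would use the probabilistic form of the saddle-point method: $\vb*{x}^{\vb{n}}P(\vb*{x})^{-N}Z_{w,\vb{n},N}$ is the probability that $N$ i.i.d.\ tiles drawn with law $\propto w(t)\prod_v x_v^{t_v}$ have total occupancy vector $\vb{n}$. Choosing $\vb*{x}=\vb*{x}(\vb{n})$ so that the mean occupancy is exactly $\vb{n}$---possible for $N$ large, since then $\vb{n}/N\in\calM_\R$, with $\vb*{x}(\vb{n})\to\vb*{x}$ a solution of (\ref{eqn:crit})---a local central limit theorem makes this probability $\Theta(N^{-k/2})$ for $\vb{n}$ in the appropriate lattice coset, i.e.\ precisely when $\vb{n}\in\calM_\Z^N$, which is exactly when $Z_{w,\vb{n},N}\neq0$. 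Taking $\tfrac1N\log$ then gives $\tfrac1N\log Z_{w,\vb{n},N}\to \log P(\vb*{x})-\sum_v\alpha_v\log x_v$, and this limit does not depend on the chosen solution because $F$ is constant along $W^\perp$, which contains every difference of solutions of (\ref{eqn:crit}).

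The step that requires real work---and the only place the hypotheses $\vb*{\alpha}\in\calM_\R$ and $w>0$ are genuinely used---is this local limit theorem for the coefficient $[\vb*{x}^{\vb{n}}]P(\vb*{x})^N$: one must identify the lattice of achievable $\vb{n}$, verify that the limiting Gaussian is non-degenerate \emph{after quotienting out the invariant directions $W^\perp$} (where positivity of $w$ and the covariance computation above re-enter), and control the error uniformly as $\vb{n}/N\to\vb*{\alpha}$. This is exactly where our combinatorics departs from \cite{KP22}: there the coefficient of $\vb*{x}^{\vb{n}}$ is extracted from an exponential-type generating function carrying symmetry factors for unordered tile collections, whereas here the colors make the $N$ partial matchings ordered and the coefficient is extracted directly from $P(\vb*{x})^N$---which is why (\ref{eqn:growthratesigma}) takes the form above rather than the one in \cite{KP22}, while the convex-analytic core (the criticality equations and their essentially unique solvability) is identical.
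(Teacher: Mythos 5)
Your proposal is correct and takes essentially the same route as the paper, which does not reprove this theorem but imports it from \cite{KP22}, noting only that here \(Z_{w,\vb{n},N} = [\vb*{x}^{\vb{n}}]\,P(\vb*{x})^N\) (the colors order the \(N\) partial matchings, eliminating the symmetry factors of the multinomial blowup model) --- precisely the combinatorial modification you identify as producing the form of (\ref{eqn:growthratesigma}). The convex-duality skeleton you lay out (coercivity of \(F\) transverse to \(W^\perp=\ker(D^*)\oplus\R\vb{1}\), uniqueness of \(w'\) modulo the gauge kernel, upper bound by the trivial coefficient estimate, lower bound by a local CLT) is the content of the \cite{KP22} argument that the paper relies on without reproducing.
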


As a consequence of this theorem we obtain a new weight function \(w'\) given by \(w'(t) = w(t) \prod_{v\in t} \rmx_v\).
We refer to \(w'\) as the \textit{critical weight function} and a positive solution \(\vb{x}\) to the criticality equations as a \textit{critical solution} or \textit{critical gauge}. We will see in Section \ref{subsec:Xt} how to obtain the joint distribution of tile multiplicities from the critical gauge.

The proof of the theorem demonstrates that as a function of \(\vb*{\alpha}\), \(\sigma\) is the negative of the Legendre dual of a strictly convex function, and hence \(\sigma\) is strictly concave. 
The proof also reveals that positive solutions to the criticality equations always exist but are not in general unique, though each solution results in the same critical weight function. In particular, \(\vb{x}\) is only unique up to a global multiplicative constant, as well as (pointwise) multiplication by gauge transformations \(f\) coming from \(\ker D^*\), in case this kernel is nontrivial. 

The non uniqueness of the critical solution allows us to normalize \(P(\vb{x})\) to arrive at the probability generating function of a related distribution. 
Since \(P\) is homogeneous we may scale \(\vb{x}\) to let \(P(\vb{x})=1\). In this case the tiling polynomial in the critical weights 
\begin{equation}\label{eqn:crittilingpoly}
\tilde{P}(\vb*{x})= \sum_t w'(t) \prod_{v} x_v^{t_v}
\end{equation}
is the probability generating function for choosing a tile of type \(t\) with probability \(w'(t)\).
Then, \(\tilde{P}(\vb*{x})^N\) is the probability generating function for choosing \(N\) independent identically distributed tiles, and by Theorem \ref{thm:critgauge}, this multinomial distribution has the property that the expected vertex multiplicity at \(v\) is \(N\alpha_v\).

\subsection{Distribution of tile multiplicities} \label{subsec:Xt}

Let \(X_t : \Omega_{\vb{n},N}\to \Z_{\geq 0}\) be the random variable counting the number of times a tile of type \(t\) occurs in a colored \(\vb{n}\)-multiweb, i.e. \(X_t(m) =m_t\). Section 5 of \cite{KP22} explains how to approximate the distribution of \(\vb{X} = (X_t)_t\) when \(n_v\) and \(N\) are large. 
The idea is firstly to take the limit of the multinomial distribution described in Section \ref{subsec:growthrate}, where the event probabilities are given by the critical tile weights from Theorem \ref{thm:critgauge}.
Then, condition the resulting multivariate Gaussian to ensure the correct vertex densities. 

Let the tile weight matrix \(C\) be the one corresponding to critical weights. Then, \(\Delta = D C D^*\) is the tiling Laplacian for critical weights.
Taking the limits \(n_v\to \infty\) and \(N\to \infty\) in the manner described in Section \ref{subsec:growthrate}, we have the following result regarding the distribution of \(\vb{X}\).

\begin{theorem}[\cite{KP22}] \label{thm:Xtdistr}
In the limit of large \(n_v\) and \(N\), the joint distribution of the \(X_t\) tends to a multidimensional Gaussian with mean \(\Expe(\vb{X}) = N \vec{w}\) (where \(\vec{w}\) is the vector of critical tile weights) and covariance matrix \(\Cov(\vb{X}) = NC(I-D^\top \Delta ^{-1} DC)\).  
\end{theorem}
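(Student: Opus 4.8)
The plan is to realize \(\vb{X}\) \emph{exactly} as a multinomial vector conditioned on the vertex constraints, take the multivariate local limit, and then read off the limiting mean and covariance by a standard Gaussian-conditioning computation. First, by gauge invariance (the Lemma of Section \ref{subsec:probmeas}) we may replace \(w\) by the critical weight function \(w'\) without changing \(\Prob\), and by homogeneity of \(P\) (Section \ref{subsec:growthrate}) we normalize the critical gauge so that \(\sum_t w'(t) = P(\vb{x}) = 1\); thus \(\vec{w} = (w'(t))_t\) is a probability vector on \(T\) and \(C = \mathrm{diag}(\vec{w})\). Expanding \(Z_{w',\vb{n},N} = [\vb*{x}^{\vb{n}}]\,P(\vb*{x})^N\) shows that for each admissible \(\mathbf{m} = (m_t)_t\) with \(D\mathbf{m} = \vb{n}\) one has \(\Prob(\vb{X} = \mathbf{m}) = \binom{N}{\mathbf{m}}\prod_t w'(t)^{m_t}\big/ Z_{w',\vb{n},N}\); since in the homogenized graph all tiles have the same total multiplicity, the equality \(\sum_t m_t = N\) is automatically forced by \(D\mathbf{m} = \vb{n}\), so the numerator is precisely the probability that \(\mathrm{Mult}(N,\vec{w})\) equals \(\mathbf{m}\). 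Hence \(\vb{X}\) has the law of \(\mathrm{Mult}(N,\vec{w})\) conditioned on the event \(\{D(\cdot) = \vb{n}\}\).

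Second, I would invoke a local central limit theorem for this conditioned multinomial as \(n_v, N \to \infty\) with \(\vb{n}/N \to \vb*{\alpha}\) in the interior of \(\calM_\R\), exactly as in Section 5 of \cite{KP22}: the only change from their set-up is that the relevant generating function is \(\tilde{P}(\vb*{x})^N\) rather than a blow-up polynomial, so the saddle-point analysis of \(\log \tilde{P}\) carries over, the saddle sitting at \(\vb{x} = \mathbf{1}\) precisely because the gauge is critical (Theorem \ref{thm:critgauge}). The conclusion is that \((\vb{X} - N\vec{w})/\sqrt{N}\) converges in distribution to the centered Gaussian supported on \(\ker D\) whose density with respect to Lebesgue measure on \(\ker D\) is proportional to \(\exp(-\tfrac12 \mathbf{y}^\top C^{-1}\mathbf{y})\), i.e. the restriction to \(\ker D\) of the usual (Pearson-type) multinomial quadratic form \(\sum_t (x_t - Nw'(t))^2/(Nw'(t))\). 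Two observations make this work: \(C\) is positive definite since all \(w'(t) > 0\); and in the homogenized graph the all-ones functional on \(T\) lies in \(\Im(D^\top)\) (being \(D^\top c\) for the appropriate constant \(c\) on \(V\)), so conditioning on \(D(\cdot) = \vb{n}\) already subsumes the multinomial's own constraint \(\sum_t X_t = N\) and lets us drop the rank-one term \(-\vec{w}\vec{w}^\top\) that would otherwise appear in the unconditioned multinomial covariance. Interiority of \(\vb*{\alpha}\) guarantees that \(N\vec{w}\) lies in the relative interior of the feasible polytope \(\{x \in (\R_{\geq 0})^T : Dx = \vb{n}\}\), so the limit is nondegenerate on all of \(\ker D\) and is centered at \(N\vec{w}\) because \(D(N\vec{w}) = N\vb*{\alpha} = \vb{n}\); this yields \(\Expe(\vb{X}) = N\vec{w}\) to leading order, as asserted.

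It then remains only to identify the covariance operator of this limiting Gaussian, which is the standard conditioned-Gaussian (Schur-complement) identity: the covariance of \(\mathcal{N}(0,C)\) conditioned on \(D(\cdot) = 0\) is \(C - CD^\top(DCD^\top)^{-1}DC\), where \((DCD^\top)^{-1} = \Delta^{-1}\) is the inverse of the tiling Laplacian on its invariant subspace \(\Im(D)\), which is legitimate because \(\Delta\colon \Im(D)\to\Im(D)\) is an isomorphism (Section \ref{subsec:tilinglaplacian}) and \(C\) is positive definite. Scaling by \(N\) gives \(\Cov(\vb{X}) = N\big(C - CD^\top\Delta^{-1}DC\big) = NC\big(I - D^\top\Delta^{-1}DC\big)\). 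One can alternatively bypass the density manipulation and extract both moments from the joint probability generating function \([\vb*{x}^{\vb{n}}]\bigl(\sum_t w'(t)\,u_t \prod_v x_v^{t_v}\bigr)^N\) by the same saddle-point expansion, differentiating in the auxiliary variables \(u_t\); this is the route closest to \cite{KP22}. Either way, the main obstacle is the second step: making the local limit theorem precise for the multinomial conditioned on the \emph{moving} affine sublattice \(\{\mathbf{m} \in (\Z_{\geq 0})^T : D\mathbf{m} = \vb{n}\}\), and verifying that its hypotheses (nondegeneracy of the saddle-point Hessian, and the coset/aperiodicity conditions on the lattice \(D((\Z)^T)\)) hold uniformly along the chosen sequence; once that is in place, the mean and covariance are forced by the linear-algebra identities above.
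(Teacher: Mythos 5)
Your proposal is correct in outline and rests on the same underlying idea the paper ascribes to \cite{KP22} --- approximate \(\vb{X}\) by a multinomial in the critical weights, take a Gaussian limit, and condition on the vertex multiplicities --- but it routes the technical work differently from the paper's own (very short) proof. The paper imports the argument of \cite{KP22} essentially verbatim and rewrites only one step: the exact finite-\(N\) identity \(\Cov(X_t,X_{t'}) = \delta_{tt'}\Expe(X_t) + \Expe(X_t)\left(\Expe^*(X_{t'})-\Expe(X_{t'})\right)\), which it proves by decomposing \(X_t = X_t^1+\cdots+X_t^N\) into per-color indicators and exploiting exchangeability of the colors; the asymptotics of \(\Expe^*-\Expe\) then produce the \(D^\top\Delta^{-1}DC\) correction. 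You instead observe that, precisely because the colors label the tiles, \(\vb{X}\) is \emph{exactly} \(\mathrm{Mult}(N,\vec{w})\) conditioned on \(\{D\mathbf{m}=\vb{n}\}\) (the number of multiwebs with multiplicity vector \(\mathbf{m}\) being \(\binom{N}{\mathbf{m}}\), with \(\sum_t m_t=N\) forced by homogenization), and you then read the covariance off the Gaussian-conditioning/Schur-complement identity; your supporting observations --- that the all-ones functional lies in \(\Im(D^\top)\) so the rank-one multinomial correction is absorbed by the conditioning, and that interiority of \(\vb*{\alpha}\) gives nondegeneracy on \(\ker D\) --- are both correct and both needed. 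This is a clean alternative that is genuinely specific to the colored model and renders the exchangeability step unnecessary; the price is that you must carry the full local limit theorem for the multinomial conditioned on the moving affine sublattice, which you rightly flag as the main unproved obstacle and which is exactly the analytic content of \cite{KP22} that the paper reuses without modification. Since the theorem is cited from \cite{KP22}, deferring that step is defensible, but be aware that your sketch supplies a different piece of the argument than the one the paper actually writes out.
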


Following the proof in \cite{KP22} line by line also proves the theorem for our model, except for one step where the justification has to be modified; we now describe how to do so.
As part of the proof we need to show that the random variables \(X_t\) counting the tile multiplicities satisfy
\(\Cov(X_t, X_{t'}) = \delta_{t t'} \Expe(X_t)+ \Expe(X_t) \left(\Expe^* (X_{t'}) - \Expe(X_{t'}) \right)\),
where the \(*\) superscript refers to the probability measure on \(\Omega_{\vb{n^*}, N-1}(\calG)\) with \(n^*_v = n_v -t_v\). 
Firstly, each \(X_t\) is a sum of \(\{0,1\}\)-valued random variables, each corresponding to one of the colors: \(X_t = X_t^1 + \cdots + X_t^N\). By symmetry, \(\Expe(X_t^i X_{t'}^j) = \Expe(X_t^1 X_{t'}^2)\) for all \(i\neq j\), and
\[\Expe(X_t^1 X_{t'}^2) = \Prob(X_t^1 =1, X_{t'}^2 =1) = \Prob (X_{t'}^2 =1 \mid X_t^1 = 1) \Prob (X_t^1 =1) = \Prob^* (X_{t'}^2 = 1)\Prob(X_t^1 =1).\]
Furthermore, for all \(i\) we have \(\Expe(X_t^i X_{t'}^i) = \Expe(X_t^1 X_{t'}^1)\), which is zero if \(t\neq t'\) since then \(t\) and \(t'\) cannot both be assigned to a single color. Lastly, we have \(\Expe^*(X_{t'}) = (N-1) \Expe^*(X_{t'}^2)\). Thus,
\[\Expe(X_t X_{t'})  =N \Expe(X_t^1X_{t'}^1) + N(N-1) \Expe(X_t^1 X_{t'}^2) = \delta_{t t'} \Expe(X_t)+ \Expe(X_t) \Expe^*(X_{t'}).\]

Note in general \(\Delta\) may not be invertible as it can have a nontrivial kernel \(\ker \Delta = \ker D^*\). However we saw in Section \ref{subsec:tilinglaplacian} that \(\Delta\colon \Im(D) \to \Im(D)\) is invertible, and restricting domains and codomains as follows does not change the overall map: 
\[\R^T \xrightarrow{C} \R^T \xrightarrow{D} \Im(D) \xrightarrow{\Delta^{-1}} \Im(D) \xrightarrow{D^*} \R^T.\]
Thus, it is always possible to make sense of the \((T\times T)\)-matrix \(D^\top \Delta ^{-1} DC\).


\section{Cycle graph with constant vertex densities} \label{sec:excycle}

In this section we study the model on a cycle graph. We identify a special value for the vertex density which results in a uniform tile distribution. Lastly, we describe how to invert the tiling Laplacian; we will use it to compute local tile distributions in Section \ref{sec:exlocal}. 

Let \(\calG\) be the cycle graph of odd length \(L\geq 3\) (the case of \(L=1\) is trivial since it has a single partial matching, namely the empty matching with no edges). The vertex and edge sets of \(\calG\) are \(V = \{v_1,\dots, v_L\}\) and \(E = \{v_1 v_2, \dots, v_{L-1} v_L, v_L v_1\}\). Suppose our initial weight function is identically \(1\) and the vertex densities are constant, i.e. we have \(w(t) \equiv 1\) and \(\alpha_{v} \equiv \alpha\) for \(v = v_1,\dots, v_L\). It follows that \(\alpha_0 = L(1-\alpha)\). Here we note that for \((\alpha_0,\alpha,\dots,\alpha)\) to be feasible, we need \(\alpha\in \left(0,\frac{L-1}{L}\right)\). The maximum number of vertices covered by a partial matching of \(\calG\) is \(L-1\) since \(L\) is odd, so the density is bounded above by \(\frac{L-1}{L}\).

\subsection{Critical gauge}

By Theorem \ref{thm:critgauge}, a critical solution \(\vb{x}\) is a real positive solution to the system of equations (\ref{eqn:crit}). 
Recall from Section \ref{subsec:growthrate} that the critical solution is unique up to a global multiplicative factor and gauge transformations in \(\ker D^*\). 
The kernel is trivial in this case. If \(f\) is a gauge transformation not changing tile weights, then \(f(v_i) = \frac{1}{f(v_{i+1})}\) and so \(f(v_i)=1\) for all \(i\) because \(L\) is odd.
One can check that \(\ker D^*\) is nontrivial when \(L\) is even. 
Thus after we fix the normalization \(P(\vb{x}) =1\), there is a unique critical solution. 

Due to the structure of the cycle graph and our choice of constant vertex densities, the system of criticality equations is symmetric under the transitive action of the dihedral group of order \(2L\) on \(x_1, \dots, x_L\). This symmetry, combined with the uniqueness of the critical solution, implies that the solution satisfies \(\rmx_{1} =\cdots = \rmx_{L}\).
We then only have to solve for \(x_0\), \(x_1\) in the system
\[
\begin{aligned}
x_i \partial_{x_i} P(\vb*{x})\big \vert_{x_1=\cdots=x_L} &= \alpha & \text{for } i = 1,\dots, L\\
x_0 \partial_{x_0} P(\vb*{x})\big \vert_{x_1=\cdots=x_L} &= L(1-\alpha) \\
P(\vb*{x})\big \vert_{x_1=\cdots=x_L} &= 1.
\end{aligned} 
\]
Observe that the sum of the first \(L+1\) equations is \(L\) times the last equation, and the first \(L\) equations are identical. 
We also see that in this case, applying the degree operator \(x_0 \partial_{x_0}\) to \(P\) commutes with setting \(x_{1} = \cdots = x_{L}\).
Let 
\(P_0(x_0,x_1) \coloneq P(\vb*{x})\big \vert_{x_1=\cdots=x_L} \) 
be the \textit{reduced tiling polynomial}.
Now, finding the critical solution amounts to solving
\begin{align}
x_0 \partial_{x_0} P_0(x_0,x_1) &= L(1-\alpha) \label{eqn:reducedcritsyszero} \\
P_0(x_0,x_1) &= 1. \label{eqn:reducedcritsysnorm}
\end{align}
Then, \(\rmx_0, \rmx_1\) will be the roots of some polynomial depending on \(L\) and \(\alpha\). Applying Theorem \ref{thm:critgauge}, the exponential growth rate will be 
\(\sigma(w, \alpha)= - L(1-\alpha) \log (\rmx_0)-L \alpha \log (\rmx_1) \).

\subsubsection{Reduced tiling polynomial}

We find a closed-form expression for \(P_0\) by performing recursion on the size \(L\) of the graph. 

\begin{proposition}[Reduced tiling polynomial] \label{prop:redtilingpoly}
The reduced tiling polynomial for a cycle graph of length \(L \geq 1\) is
\begin{equation} \label{eqn:P0}
P^L_0(x_0,x_1) = 2^{-L} \pqty{\left(x_0 -\sqrt{x_0^2 + 4x_1^2} \right)^L + \left(x_0 +\sqrt{x_0^2 + 4x_1^2} \right)^L}.
\end{equation}
\end{proposition}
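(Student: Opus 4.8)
The plan is to prove this by a transfer-matrix argument: set up a linear recursion in $L$ that relates matchings of the cycle $C_L$ to matchings of paths, and then solve it in closed form. The starting observation is that, since $w\equiv 1$ and the homogenized tiling polynomial of $\calG=C_L$ has degree $V=L$, a partial matching $t$ with $s(t)$ edges contributes the monomial $x_0^{L-2s(t)}\prod_{v\in t}x_v$; after the substitution $x_1=\dots=x_L$ this becomes $x_0^{L-2s(t)}x_1^{2s(t)}$, so that $P_0^L(x_0,x_1)=\sum_{s\ge 0}c_{L,s}\,x_0^{L-2s}x_1^{2s}$, where $c_{L,s}$ is the number of size-$s$ matchings of $C_L$. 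I will not need any closed form for $c_{L,s}$.

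First I would introduce the analogous reduced, homogenized polynomial $Q_m(x_0,x_1)$ of the path $P_m$ on $m$ vertices. Fixing the edge $e=v_Lv_1$ of the cycle, I split matchings of $C_L$ according to whether they contain $e$: those avoiding $e$ are exactly the matchings of the path $v_1v_2\cdots v_L$, while those containing $e$ consist of $e$ together with a matching of the path $v_2\cdots v_{L-1}$ on $L-2$ vertices, the two covered endpoints of $e$ contributing a factor $x_1^2$ that restores the homogenization degree to $L$. This yields $P_0^L = Q_L + x_1^2\,Q_{L-2}$. Similarly, splitting a matching of $P_m$ according to whether its last vertex is uncovered (factor $x_0$, leaving a matching of $P_{m-1}$) or matched to the preceding vertex (factor $x_1^2$, leaving a matching of $P_{m-2}$) gives $Q_m = x_0\,Q_{m-1} + x_1^2\,Q_{m-2}$ for $m\ge 2$, with $Q_0=1$ and $Q_1=x_0$.

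Then I would solve this constant-coefficient recursion. Its characteristic polynomial $\lambda^2-x_0\lambda-x_1^2$ has roots $\lambda_\pm=\tfrac12\bigl(x_0\pm\sqrt{x_0^2+4x_1^2}\bigr)$ with $\lambda_++\lambda_-=x_0$ and $\lambda_+\lambda_-=-x_1^2$; matching the initial conditions gives $Q_m=(\lambda_+^{m+1}-\lambda_-^{m+1})/(\lambda_+-\lambda_-)$. Substituting this into $P_0^L=Q_L+x_1^2Q_{L-2}$ and replacing $x_1^2$ by $-\lambda_+\lambda_-$, the numerator collapses to $\lambda_+^L(\lambda_+-\lambda_-)+\lambda_-^L(\lambda_+-\lambda_-)$, so $P_0^L=\lambda_+^L+\lambda_-^L$, which is exactly (\ref{eqn:P0}). (Equivalently, one can note $P_0^L=\operatorname{tr}M^L$ for $M=\left(\begin{smallmatrix}x_0&x_1^2\\1&0\end{smallmatrix}\right)$, whose eigenvalues are $\lambda_\pm$.) As a sanity check I would verify the cases $L=1,2$ directly against the claimed formula.

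I do not expect a genuine obstacle: this is a routine second-order linear recursion. The two points that need care are the bookkeeping of the homogenizing powers of $x_0$ when passing between the cycle and its two path pieces, and making sure the degenerate smallest cases ($C_1$, and $C_2$ if one allows it as a multigraph) are consistent with the conventions fixed earlier in the paper; everything else is formal manipulation.
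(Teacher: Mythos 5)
Your proposal is correct and follows essentially the same route as the paper: the decomposition $P_0^L = Q_L + x_1^2 Q_{L-2}$ according to whether the edge $v_Lv_1$ is used, the path recursion $Q_m = x_0 Q_{m-1} + x_1^2 Q_{m-2}$ with $Q_0=1$, $Q_1=x_0$, and the closed-form solution via the characteristic roots $\lambda_\pm$. The only difference is that you spell out the final simplification $Q_L + x_1^2 Q_{L-2} = \lambda_+^L + \lambda_-^L$, which the paper leaves implicit.
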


\begin{proof}
Let \(Q_0^L(x_0, x_1)\) denote the reduced tiling polynomial for a line graph with \(L\) vertices, where all tile weights are \(1\). Once we find \(Q_0^L\) we will be done since
\(P_0^L = Q_0^L + x_1^2 Q_0^{L-2}\).
The first summand corresponds to all partial matchings not containing the edge \(v_L v_1\), and the second summand corresponds to the case where this edge is present. We find \(Q_0^L \) recursively. We have \(Q_0^0 =1\) and \(Q_0^1 = x_0\), and in general 
\(Q_0^L = x_0 Q_0^{L-1} + x_1^2 Q_0^{L-2}\). On the right-hand side of this recurrence, the first (resp. second) term corresponds to the case where the first vertex is not covered (resp. is covered) by the partial matching. We then solve for \(Q_0^L\) and obtain
\begin{equation}\label{eqn:Q0L}
Q_0^L(x_0,x_1) = \frac{2^{-(L+1)}}{\sqrt{x_0^2 + 4x_1^2}} \pqty{-\left(x_0-\sqrt{x_0^2+4 x_1^2}\right)^{L+1}+\left(x_0 + \sqrt{x_0^2+4 x_1^2}\right)^{L+1}} .
\end{equation}
\end{proof}

Let \(\varphi = \frac{1+\sqrt{5}}{2}\) be the golden ratio and let \(\psi = - \frac{1}{\varphi} = \frac{1-\sqrt{5}}{2}\). We can enumerate the number of partial matchings of a line graph (resp. cycle graph) by substituting \(x_0 = 1\), \(x_1 = 1\) into the reduced tiling polynomial \(Q_0^L\) (resp. \(P_0^L\)).

\begin{corollary} \label{cor:numtiles} 
The number of partial matchings of a line graph of length \(L-1\) is the \(L\)-th Fibonacci number \(F_{L}=\frac{\varphi^L - \psi^L}{\sqrt{5}}\). 
%
The number of partial matchings of a cycle graph of length \(L\) is \(\varphi^L + \psi^L\).
\end{corollary}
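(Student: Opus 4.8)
The plan is to obtain both counts directly from Proposition~\ref{prop:redtilingpoly} and the formula (\ref{eqn:Q0L}) in its proof, by specializing all vertex variables to $1$. First I would record why this specialization computes the number of partial matchings. Since the initial weight function is $w\equiv 1$, the homogenized tiling polynomial of a graph is $\sum_{t\in T} x_0^{V-2s(t)}\prod_{v} x_v^{t_v}$, so setting $x_0$ and every $x_v$ equal to $1$ gives $\sum_{t\in T}1 = |T|$, which is exactly the number of partial matchings; the homogenizing factor $x_0^{V-2s(t)}$ is identically $1$ there and is harmless. The reduced tiling polynomials $P_0^L$ and $Q_0^L$ are obtained from these by the further substitution $x_1=\cdots=x_L$, which does not change the value at the all-ones point, so $P_0^L(1,1)$ is the number of partial matchings of the cycle graph, and $Q_0^L(1,1)$ is the number of partial matchings of the line graph on $L$ vertices.

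Next I would carry out the substitution $x_0=x_1=1$ in (\ref{eqn:P0}) and (\ref{eqn:Q0L}). There $\sqrt{x_0^2+4x_1^2}=\sqrt5$, and the one identity that makes everything collapse is $x_0\pm\sqrt{x_0^2+4x_1^2}=1\pm\sqrt5=2\varphi$ or $2\psi$, which is just the definition of $\varphi$ and $\psi$. Plugging into (\ref{eqn:Q0L}),
\[
Q_0^L(1,1)=\frac{2^{-(L+1)}}{\sqrt5}\pqty{(2\varphi)^{L+1}-(2\psi)^{L+1}}=\frac{\varphi^{L+1}-\psi^{L+1}}{\sqrt5}=F_{L+1},
\]
so a line graph on $L-1$ vertices (``length $L-1$'') has $F_L$ partial matchings. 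Plugging into (\ref{eqn:P0}),
\[
P_0^L(1,1)=2^{-L}\pqty{(2\psi)^L+(2\varphi)^L}=\varphi^L+\psi^L,
\]
which is the second assertion.

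The computation is mechanical, so the only points that deserve a word of care are (a) the remark above that evaluating at $(1,1)$ gives an \emph{unweighted} count rather than a weighted sum, which is immediate once one recalls $w\equiv1$ and that homogenization inserts only powers of $x_0$; and (b) the Fibonacci bookkeeping, i.e.\ being consistent about ``length'' meaning number of vertices (as for the cycle), so that the path of ``length $L-1$'' is $Q_0^{L-1}$ and one lands on $F_L$ rather than $F_{L+1}$. If one prefers to avoid the closed form (\ref{eqn:Q0L}), the same conclusion follows from the recurrence $Q_0^L=x_0Q_0^{L-1}+x_1^2Q_0^{L-2}$ of Proposition~\ref{prop:redtilingpoly}'s proof: at $x_0=x_1=1$ it reads $Q_0^L(1,1)=Q_0^{L-1}(1,1)+Q_0^{L-2}(1,1)$ with $Q_0^0(1,1)=Q_0^1(1,1)=1$, which is the Fibonacci recursion. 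There is no real obstacle here; the corollary serves mainly as a consistency check on Proposition~\ref{prop:redtilingpoly}.
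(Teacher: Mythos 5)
Your proposal is correct and is exactly the paper's approach: the corollary is obtained by substituting $x_0=x_1=1$ into $Q_0^L$ and $P_0^L$ and using $1\pm\sqrt5=2\varphi,\,2\psi$, with the same indexing convention (a line graph of ``length'' $M$ has $M$ vertices, hence $Q_0^M(1,1)=F_{M+1}$). Your remarks on why the all-ones specialization gives an unweighted count, and the alternative via the Fibonacci recurrence, are consistent with the paper.
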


\subsection{Uniform critical weights and critical vertex densities}

The critical tile weights are \(\rmx_0^{L-2s(t)} \rmx_1^{2s(t)}\), so they depend only on the number of edges in the partial matching corresponding to \(t\). Figure \ref{fig:L=11butterfly} is a plot of these tile probabilities as the vertex density \(\alpha\) varies, and there appears to be an \(\alpha\) for which all tile probabilities equal. In fact we can show that this special value of \(\alpha\) always exists.

\begin{figure} 
\centering
\includegraphics[scale=0.58]{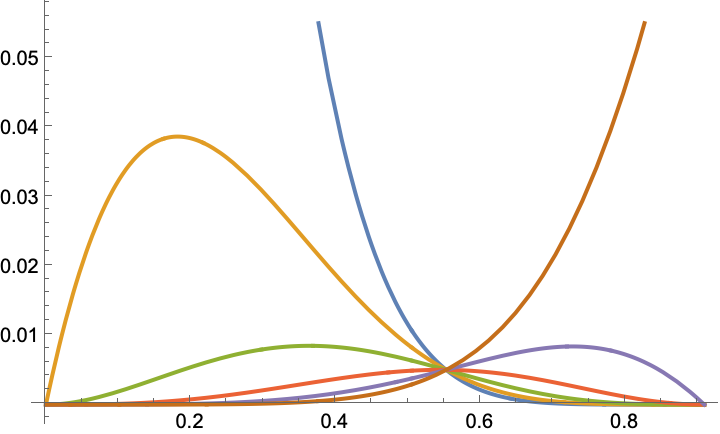}
\caption{Critical tile probabilities for \(L=11\) cycle graph and \(\alpha \in \left(0,\frac{10}{11}\right)\). The tile probability for tile sizes 0, 1, \textellipsis, 5 is blue, orange, green, red, purple, brown, respectively.}
\label{fig:L=11butterfly}
\end{figure}

\begin{theorem}[Critical density] \label{thm:alphahat}
For a cycle graph of any odd length \(L\geq3\), there exists a unique vertex density \(\hat{\alpha}\) for which the critical tile distribution is uniform, and
\begin{equation} \label{eqn:alphahat}
\hat \alpha = 1- \frac{(\varphi^L - \psi^L)/\sqrt{5}}{\varphi^L + \psi^L}.
\end{equation}  
Furthermore, \(\hat{\alpha}\) is the unique density which maximizes the exponential growth rate \(\sigma(\alpha)\).
\end{theorem}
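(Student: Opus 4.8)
The statement has two halves --- the closed form (and existence/uniqueness) of the density \(\hat\alpha\) making the critical tile distribution uniform, and the claim that \(\hat\alpha\) is the unique maximizer of \(\sigma(\alpha)\) --- and the plan is to arrange things so that a single derivative computation settles both uniqueness claims at once.

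For the value of \(\hat\alpha\) and its existence, I would exhibit the uniform distribution directly rather than solve (\ref{eqn:crit}). Let \(M=\varphi^L+\psi^L\) be the number of partial matchings of the \(L\)-cycle (Corollary \ref{cor:numtiles}) and consider the constant weight \(w'(t)\equiv 1/M\). Because, after homogenization, every tile is a multiset of exactly \(L\) vertices, the constant gauge transformation \(f\equiv M^{-1/L}\) carries \(w\equiv 1\) to \(w'\); hence \(w'\) is gauge equivalent to \(w\), has \(\sum_t w'(t)=1\), and its induced density vector \(\hat\alpha_v\coloneq\sum_t t_v w'(t)\) is \(D\) applied to the positive, unit-sum weight vector \((1/M)_t\), so it lies in \(\calM_\R\). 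By Theorem \ref{thm:critgauge} this forces \(w'\) to be \emph{the} critical weight function for \(\hat\alpha\); thus the critical tile distribution at \(\hat\alpha\) is uniform, which proves existence, and since \(w'>0\) on all tiles, \(\hat\alpha\) is interior. By symmetry \(\hat\alpha_{v_1}=\cdots=\hat\alpha_{v_L}\); calling the common value \(\hat\alpha\), I would then read off
\[
\hat\alpha\;=\;\frac1M\,\#\{\text{matchings of the cycle covering }v_1\}\;=\;\frac{M-F_L}{M},
\]
using that the matchings missing \(v_1\) are precisely the partial matchings of the path on \(L-1\) vertices left after deleting \(v_1\), of which there are \(F_L\) (Corollary \ref{cor:numtiles}); since \(F_L=(\varphi^L-\psi^L)/\sqrt5\) and \(M=\varphi^L+\psi^L\), this is exactly (\ref{eqn:alphahat}). (The same value also drops out of imposing \(x_0=x_1\) in the reduced system (\ref{eqn:reducedcritsyszero})--(\ref{eqn:reducedcritsysnorm}) and evaluating \(x_0\partial_{x_0}P_0/P_0\) at \(x_0=x_1=1\) via Proposition \ref{prop:redtilingpoly}; that is the more computational route.)

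For uniqueness and maximality I would first note that, since the \(L\)-cycle carries tiles of at least two distinct sizes and the critical weight of \(t\) is \(\rmx_0^{L-2s(t)}\rmx_1^{2s(t)}\), the critical distribution --- for the critical solution, which under the normalization \(P_0=1\) is unique because \(\ker D^*=0\) for odd \(L\) --- is uniform if and only if \(\rmx_0=\rmx_1\). Then I would compute \(\tfrac{d}{d\alpha}\sigma(\alpha)\): from the normalization \(P_0=1\) and \(\sigma(\alpha)=-L(1-\alpha)\log\rmx_0-L\alpha\log\rmx_1\), differentiation leaves \(\tfrac{d\sigma}{d\alpha}=L\log(\rmx_0/\rmx_1)\), because the \(\rmx_0',\rmx_1'\) terms combine into \((1-\alpha)\rmx_0'/\rmx_0+\alpha\rmx_1'/\rmx_1\), and this vanishes: differentiating the constraint \(P_0(\rmx_0(\alpha),\rmx_1(\alpha))=1\) and substituting \(\rmx_0\partial_{x_0}P_0=L(1-\alpha)\) and \(\rmx_1\partial_{x_1}P_0=L\alpha\) (the second from Euler's identity for the degree-\(L\) homogeneous \(P_0\)) gives exactly that. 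Since \(\sigma\) is strictly concave in \(\alpha\) (from the proof of Theorem \ref{thm:critgauge}), the quantity \(L\log(\rmx_0/\rmx_1)\) is strictly decreasing in \(\alpha\), hence has at most one zero; it does vanish at \(\hat\alpha\) (where \(\rmx_0=\rmx_1\)). So \(\hat\alpha\) is the unique zero of \(\sigma'\), equivalently the unique density with \(\rmx_0=\rmx_1\), equivalently (by the observation above) the unique density with uniform critical distribution, and --- being interior with \(\sigma\) strictly concave --- the unique maximizer of \(\sigma\).

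I expect the main obstacle to be the maximality half: making the derivative computation rigorous requires knowing \(\alpha\mapsto(\rmx_0(\alpha),\rmx_1(\alpha))\) is differentiable on the feasible interval (implicit function theorem, using the nondegeneracy behind strict convexity), handling the chain rule with \(\alpha_0=L(1-\alpha)\) a dependent variable, and correctly applying ``strictly concave \(\Rightarrow\) unique interior maximizer'' on \(\alpha\in(0,\tfrac{L-1}{L})\) --- including the routine but necessary check that \(\hat\alpha\) actually lies in this open interval (equivalently \(LF_L>M\), which follows from the identity \(M=F_{L+1}+F_{L-1}\) together with the growth of the Fibonacci numbers). The existence half, by contrast, is essentially bookkeeping once the constant-gauge observation is made.
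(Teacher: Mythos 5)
Your proposal is correct in substance but follows a genuinely different route from the paper on both halves. For existence and the value of \(\hat\alpha\), you exploit the fact that after homogenization every tile has total degree \(L\), so the constant gauge \(f\equiv \abs{T}^{-1/L}\) sends \(w\equiv 1\) to the uniform weight \(1/\abs{T}\); Theorem \ref{thm:critgauge} then identifies this as \emph{the} critical weight function for the density \(\hat{\vb*{\alpha}}=D((1/\abs{T})_t)\), and \(\hat\alpha=1-F_L/\abs{T}\) drops out by counting tiles through a vertex. This is cleaner than the paper, which instead establishes existence by proving \(\rmx_0(\alpha)\) is strictly decreasing and \(\rmx_1(\alpha)\) strictly increasing (via the differentiated criticality equations plus boundary behavior) and then extracts the value from the closed form of \(P_0\); notably, your counting argument is essentially the heuristic the paper offers only \emph{after} its proof. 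For uniqueness and maximality, you derive the identity \(\sigma'(\alpha)=L\log(\rmx_0/\rmx_1)\) for all \(\alpha\) (the paper establishes the same cancellation but uses it only at \(\hat\alpha\)) and then let strict concavity do double duty: \(\sigma'\) strictly decreasing has at most one zero, which simultaneously gives uniqueness of the uniform density and of the maximizer. The one place your argument is thinner than the paper's is the differentiability of \(\alpha\mapsto(\rmx_0,\rmx_1)\), which your derivative identity needs on the whole interval \((0,\tfrac{L-1}{L})\): you correctly flag this as an implicit-function-theorem step but do not carry it out, whereas the paper verifies it by computing the Jacobian determinant explicitly and showing it is a (negative multiple of a) subtraction-free polynomial evaluated at positive arguments. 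That computation, or an equivalent nondegeneracy argument, would need to be supplied to make your proof complete; everything else checks out.
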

\begin{proof} 
Recall that the critical solution \(\rmx_0\), \(\rmx_1\) is by definition positive (and such a solution always exists); see Theorem \ref{thm:critgauge}. 

We first show that \(\rmx_0\), \(\rmx_1\) are continuously differentiable functions of \(\alpha\) using the Implicit Function Theorem. Triples \(\alpha\), \(\rmx_0\), \(\rmx_1\) satisfying the reduced criticality equations (\ref{eqn:reducedcritsyszero}), (\ref{eqn:reducedcritsysnorm}) are exactly zeroes of the function \(f:\R^{1+2}\to \R^2\) given by
\[f(\alpha, x_0, x_1) = \pmqty{P_0 - 1\\ x_0 \partial_{x_0} P_0 - L(1-\alpha)}.\]
We verify that the Jacobian \(J_{f,(x_0,x_1)}\) is invertible at zeros of \(f\); the determinant simplifies to
\[\det J_{f,(x_0,x_1)} = \frac{-4L^2 \pqty{-L x_0 x_1^{2L} + x_1^2 Q_0^{2L-1}}}{x_1 (x_0^2 + 4x_1^2 )},\]
where \(Q_0^{2L-1}\) is the reduced tiling polynomial from (\ref{eqn:Q0L}). 
The coefficient \([x_0 x_1^{2L-2}] Q_0^{2L-1}\) is \(2L-1\), so one can see that \(-L x_0 x_1^{2L} + x_1^2 Q_0^{2L-1}\) is a subtraction-free polynomial in \(x_0\), \(x_1\). Since \(\rmx_0\), \(\rmx_1 \) are positive, we have \(\det J_{f, (x_0,x_1)} <0\) at any zero \((\alpha, \rmx_0, \rmx_1)\) of \(f\). Thus, \(\rmx_0(\alpha)\) and \(\rmx_1(\alpha)\) are continuously differentiable.

Recalling that \(\alpha\) is the asymptotic fraction of tiles covering each vertex \(v_1,\dots, v_L\), we have \(\rmx_0 \to 1\) and \(\rmx_1\to 0\) as \(\alpha \to 0\), and \(\rmx_0\to 0\) and \(\rmx_1 \to \infty\) as \(\alpha \to \frac{L-1}{L}\). If we can show that \(\rmx_0(\alpha)\) and \(\rmx_1(\alpha)\) are strictly monotone on \(\pqty{0, \frac{L-1}{L} }\), then there will be a unique vertex density, denoted by \(\hat{\alpha}\), such that \(\rmx_0(\hat{\alpha})=\rmx_1(\hat{\alpha})\). This condition is equivalent to the critical tile weights \(\rmx_0^{L-2s(t)}\rmx_1^{2s(t)}\) being uniform, since \(\rmx_0, \rmx_1 > 0\). 

With \(x_i =\rmx_i(\alpha)\) and expressing the reduced tiling polynomial as a sum over tiles 
\(P_0(x_0, x_1) = \sum_t x_0^{L-2 s(t)} x_1^{2s(t)}\), 
we may differentiate both sides of equations (\ref{eqn:reducedcritsyszero}) and (\ref{eqn:reducedcritsysnorm}) with respect to \(\alpha\) to get
\begin{align}
\left( \sum_t (L-2s(t))^2 \rmx_0^{L-2s(t)-1} \rmx_1^{2s(t)}\right) \dv{\rmx_0}{\alpha}+ \left( \sum_t 2s(t) (L-2s(t))\rmx_0^{L-2s(t)} \rmx_1^{2s(t)-1} \right) \dv{\rmx_1}{\alpha}
&= -L \label{eqn:dalpha1} \\
\left( \sum_t (L-2s(t)) \rmx_0^{L-2s(t)-1} \rmx_1^{2s(t)}\right) \dv{\rmx_0}{\alpha}+ \left( \sum_t 2s(t) \rmx_0^{L-2s(t)} \rmx_1^{2s(t)-1} \right) \dv{\rmx_1}{\alpha}
&=0. \label{eqn:dalpha2} 
\end{align}
For a fixed \(\alpha\), the left-hand sides of the equations above are linear combinations of \(\dv{\rmx_0}{\alpha}\) and \(\dv{\rmx_1}{\alpha}\) with positive coefficients. 
Equation (\ref{eqn:dalpha2}) implies that \(\dv{\rmx_0}{\alpha}\) and \(\dv{\rmx_1}{\alpha}\) have opposite sign or are both zero; equation (\ref{eqn:dalpha1}) disallows the latter case. 
By our earlier reasoning about the behavior of \(\rmx_0\) and \(\rmx_1\) as \(\alpha\) approaches \(0\) or \(\frac{L-1}{L}\), and due to \(\dv{\rmx_0}{\alpha}\) and \(\dv{\rmx_1}{\alpha}\) being continuous, we may conclude that these derivatives are strictly negative and positive, respectively, for all \(\alpha\).

To find the value of \(\hat{\alpha}\), the normalization (\ref{eqn:reducedcritsysnorm}) combined with the fact that \(\rmx_0 (\hat{\alpha})=\rmx_1(\hat{\alpha})\) gives
\(P_0(x_0,x_1) = \sum_t \rmx_0(\hat{\alpha})^L = 1\).
Then, \(\rmx_0 (\hat{\alpha})=\rmx_1(\hat{\alpha}) =\abs{T}^{-\frac1L}\), where the number of tiles \(\abs{T}\) is \(\varphi^L + \psi^L\) by Corollary \ref{cor:numtiles}. 
Write equation (\ref{eqn:reducedcritsyszero}) in closed form using Proposition \ref{prop:redtilingpoly}:
\[ \frac{-L \rmx_0}{\sqrt{\rmx_0^2 + 4\rmx_1^2}} \left(2^{-L}\right) \pqty{\left(\rmx_0 -\sqrt{\rmx_0^2 + 4\rmx_1^2} \right)^L + \left(\rmx_0 +\sqrt{\rmx_0^2 + 4\rmx_1^2} \right)^L}= L(1-\alpha).\]
We evaluate both sides above at \(\alpha = \hat{\alpha}\) (so in particular \(\rmx_i (\hat{\alpha}) = \abs{T}^{-\frac1L}\)). Then, isolating \(\hat{\alpha}\) gives the result. 

Lastly, we wish to show that \(\hat{\alpha}\) maximizes the growth rate uniquely. We established earlier in Section \ref{subsec:growthrate} that \(\sigma\) is a strictly concave function of \(\alpha\), so all we have to show is that \(\hat{\alpha}\) is a critical point. We have
\begin{equation} \label{}
\dv{\sigma}{\alpha} = L \log \left( \frac{\rmx_0}{\rmx_1}\right) - \frac{L(1-\alpha)}{\rmx_0} \dv{\rmx_0}{\alpha}  - \frac{L \alpha}{\rmx_1}  \dv{\rmx_1}{\alpha}.
\end{equation}
Another expression for \(\hat{\alpha}\) coming from (\ref{eqn:reducedcritsyszero}) and (\ref{eqn:reducedcritsysnorm}) is \(\hat{\alpha} = \frac{2}{L \abs{T}} \sum_t s(t)\).
Then, the left-hand side of (\ref{eqn:dalpha2}) evaluated at \(\hat{\alpha}\) is \(-\dv{\sigma}{\alpha} (\hat{\alpha}){(\hat{\alpha})}\), and so \(\dv{\sigma}{\alpha}{(\hat{\alpha})}=0\). 
\end{proof}

We can make sense of the formula for \(\hat{\alpha}\) by asking: when all tile types are equally likely, what is the asymptotic fraction of tiles covering a fixed vertex \(v\)? In this case, it is the same as the fraction of tile types which cover \(v\). Denote the number of partial matchings of a cycle (resp. line) graph with \(L\) vertices by \(Y_L\) (resp. \(W_L\)). The tile types not covering \(v\) are in bijection with partial matchings of a line graph with \(L-1\) vertices, so \(\hat{\alpha} = \frac{Y_L - W_L}{Y_L} = 1 - \frac{W_{L-1}}{Y_L}\).
From here on we will focus our investigation on the situation of \(\alpha_{v_1} = \dots = \alpha_{v_L} = \hat{\alpha}\) and uniform tile probabilities. 

\subsection{Inverse Laplacian} \label{subsec:invlaplac}

By Theorem \ref{thm:Xtdistr}, the random vector counting tile multiplicities \(\vb{X}\) has a multidimensional Gaussian distribution when \(n_v\) and \(N\) are large; its mean and covariance matrix are
\begin{equation}\label{eqn:Xtmeancov}
\Expe(X_t) = \frac{N}{\abs{T}} \qand \Cov(\vb{X}) = \frac{N}{\abs{T}} \pqty{I -D^\top (D D^\top)^{-1} D}. 
\end{equation}
See Figure \ref{fig:L=9covX} for an instance of this covariance matrix. 

The main ingredient for computing covariances concretely from (\ref{eqn:Xtmeancov}) is inverting the Laplacian \(\Delta = DD^\top\) (in principle \(\Delta\) should contain a factor of \(\frac{1}{\abs{T}}\), but for convenience we elide it because it cancels out in (\ref{eqn:Xtmeancov})). This matrix is symmetric by definition, and we will see that it mostly consists of a circulant submatrix. The eigenvalues and normalized eigenvectors of a circulant matrix are known---we refer the reader to any reference on Toeplitz matrices (e.g. \cite{G06}) for details.

The \((i,j)\)-entry of \(\Delta\) is the number of partial matchings which contain both vertex \(v_i\) and vertex \(v_j\), counted with multiplicity \(t_{v_i} t_{v_j}\). We can write this quantity in terms of the tiling polynomial:
\[x_{i} \partial_{x_i} x_j \partial_{x_j} P(\vb*{x}) \vert_{x_i=1} = \sum_t t_{v_i} t_{v_j}.\]
We compute the entries in the zeroth row and column using the closed-form expression for \(P_0(x_0,x_1)\) from (\ref{eqn:P0}). Firstly,
\[\Delta_{00} = x_{0} \partial_{x_0} x_0 \partial_{x_0} P_0(x_0,x_1) \vert_{x_i=1}  = \frac{L}{5} \pqty{4 F_L + L \frac{F_{2L}}{F_L} }.\]
Applying a symmetry argument among the vertices \(v_1,\dots, v_L\), we also have
\[\Delta_{0v} = \Delta_{v0} = \frac1L x_{0} \partial_{x_0} x_1 \partial_{x_1} P_0(x_0,x_1) \vert_{x_i=1} = -\frac15 \pqty{(4-5L)F_L + L \frac{F_{2L}}{F_L}} \qc v = v_1,\dots, v_L. \]

Now consider the case where neither \(v_i\) nor \(v_j\) are the zero vertex. 
We count the number of partial matchings containing both vertices by considering the four pairs of line graphs coming from cutting \(\calG\) before or after \(i\) and \(j\), so this total depends on the distance \(\abs{i-j}\). From Corollary \ref{cor:numtiles}, we know that partial matchings on a line graph are enumerated by the Fibonacci sequence.
Then, the \((L\times L)\)-submatrix indexed by vertices \(v_1,\dots, v_L\) is circulant with entries \(\Delta_{ij} = c_{\abs{i-j}}\) where
\[
c_0 = 2 F_{L-1} \qand c_k = F_k F_{L-k-2} + 2F_{k-1} F_{L-k-1} + F_{k-2} F_{L-k} \qc k=1,\dots, L-1.
\]
We can now write down the \(L\) eigenvalues and orthonormal eigenvectors of the circulant submatrix. 
Let \(\omega = e^{\frac{2 \pi i}{L}}\). For \(k=0,\dots, L-1\), the eigenvalues are 
\begin{equation} \label{eqn:eigenvals2}
\lambda_k = c_0 + c_1 \omega^k + \cdots + c_{L-1} (\omega^k)^{L-1}= F_L \frac{(1+\omega^k)^2}{1+3\omega^k + \omega^{2k} },
\end{equation}
which correspond to the eigenvectors 
\[u_k=L^{-\frac12} \pqty{1, \omega^k, \cdots, (\omega^k)^{L-1}} ^\top.\]

Given the special form of the matrix, one can manually modify these eigenvectors as well as find one additional orthonormal eigenvector to obtain a complete eigensystem for \(\Delta\). We then invert \(\Delta\) via its eigendecomposition. In doing so, the following sum over roots of unity arises in entries of the matrix; let 
\begin{equation} \label{eqn:gLldef}
g_L(l)= \sum_{k=1}^{L-1} \frac{\omega^{kl}}{\lambda_k}.
\end{equation}
We also define the \((L\times L)\)-matrix
\begin{equation} \label{eqn:LxLsubmatrix}
A =  \frac{1}{L} \pqty{ \frac{\Delta_{00}}{\lambda_0 \Delta_{00} - L \Delta_{0v}^2} + g_L(i-j) }_{ i,j=0,\dots, L-1}.  
\end{equation} 
As a block matrix, the inverse of the tiling Laplacian is then
\begin{equation} \label{eqn:invDelta}
\Delta^{-1} 
= 
\setlength{\extrarowheight}{-4pt}
\renewcommand{\arraystretch}{2}
\pmqty{
\mqty{\frac{\lambda_0}{\lambda_0 \Delta_{00} - L \Delta_{0v}^2}} & \rvline& 
\mqty{-\frac{ \Delta_{0v}}{\lambda_0 \Delta_{00} - L \Delta_{0v}^2}& \cdots &-\frac{ \Delta_{0v}}{\lambda_0 \Delta_{00} - L \Delta_{0v}^2}} \\ 
\hline
\mqty{-\frac{ \Delta_{0v}}{\lambda_0 \Delta_{00} - L \Delta_{0v}^2}\\ \vdots \\ -\frac{ \Delta_{0v}}{\lambda_0 \Delta_{00} - L \Delta_{0v}^2}} & \rvline &
\bigA
}
.
\end{equation}

\subsubsection{Sum over roots of unity}

The sum involving roots of unity in (\ref{eqn:gLldef}) may be written as a closed-form expression. First, let \(
a_l = \sum_{z^L = 1} \frac{z^l}{(z+1)^2}
\),
which is well defined since we assumed that \(L\) is odd, and so \(z\neq -1\).
Using (\ref{eqn:eigenvals2}) we have
\begin{equation}\label{eqn:gLlexpanded}
g_L(l)
=\frac{1}{F_L} \pqty{- \frac54 + \sum_{k=0}^{L-1} \frac{\omega^{kl} (1+3 \omega^k + \omega^{2k})}{(1+ \omega^k)^2} }
= \frac{1}{F_L} \pqty{- \frac54+ a_l + 3a_{l+1} + a_{l+2} }. 
\end{equation}
The following lemma gives a closed-form expression for \(a_l\); one can prove the lemma via recursion.
\begin{lemma} \label{lem:bl}
If \(L\geq1\) is odd, then 
\[\sum_{z^L = 1} \frac{z^l}{(z+1)^2}= \begin{cases} 
 -\frac{L}{4}(L-2) & l =0 \\
\frac{(-1)^{l+1} L}{4}  (L - 2l +2) & l=1,\dots, L-1.
\end{cases}\]
\end{lemma}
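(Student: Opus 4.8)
The plan is to reduce the evaluation of $a_l=\sum_{z^L=1}\frac{z^l}{(z+1)^2}$ to two coupled first-order recurrences in $l$ and to solve them, with oddness of $L$ entering only to close the recurrences. Introduce the companion sums $b_l=\sum_{z^L=1}\frac{z^l}{z+1}$, which are well defined since every $L$-th root of unity satisfies $z\neq -1$ when $L$ is odd. Summing the elementary identities $\frac{z^{l+1}+z^l}{(z+1)^2}=\frac{z^l}{z+1}$ and $\frac{z^{l+1}+z^l}{z+1}=z^l$ over all $z$ with $z^L=1$ gives
\begin{equation*}
a_{l+1}+a_l = b_l \qquad\text{and}\qquad b_{l+1}+b_l = \sum_{z^L=1} z^l ,
\end{equation*}
and the last sum equals $L$ when $L\mid l$ and $0$ otherwise. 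Because $z^L=1$, both $(a_l)_l$ and $(b_l)_l$ are $L$-periodic in $l$, so each of these is a cyclic system of $L$ linear equations.

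I first solve the $b$-system. For a base value I take $b_0=\sum_{z^L=1}\frac{1}{z+1}=-\,p'(-1)/p(-1)$ with $p(z)=z^L-1$, which equals $\frac{L}{2}$ precisely because $L$ is odd (equivalently: telescoping $b_{l+1}=\big(\sum_{z^L=1}z^l\big)-b_l$ once around the cycle, oddness makes the reappearing coefficient $(-1)^L=-1$, so $2b_0=\sum_{j=0}^{L-1}(-1)^j\sum_{z^L=1}z^j=L$). Then $b_1=L-b_0=\frac{L}{2}$ and $b_{l+1}=-b_l$ for $1\le l\le L-2$ give $b_l=(-1)^{l+1}\frac{L}{2}$ for $1\le l\le L-1$. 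Running the identical cyclic telescoping on $a_{l+1}=b_l-a_l$ then yields $2a_0=\sum_{j=0}^{L-1}(-1)^j b_j=\frac{L}{2}-(L-1)\frac{L}{2}=-\frac{L(L-2)}{2}$, that is $a_0=-\frac{L}{4}(L-2)$, which is the $l=0$ case. (As a check, $a_0=\sum_{z^L=1}\frac{1}{(z+1)^2}=-\frac{d}{dw}\big(p'(w)/p(w)\big)\big|_{w=-1}$ can be evaluated directly, again using $L$ odd.)

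Finally I treat $1\le l\le L-1$ by induction on $l$ via $a_{l+1}=b_l-a_l$. The base case $a_1=b_0-a_0=\frac{L}{2}+\frac{L(L-2)}{4}=\frac{L^2}{4}$ agrees with $\frac{(-1)^2 L}{4}(L-2+2)$; and if $a_l=\frac{(-1)^{l+1}L}{4}(L-2l+2)$ for some $l\ge 1$, then
\begin{equation*}
a_{l+1} = \frac{(-1)^{l+1}L}{2}-\frac{(-1)^{l+1}L}{4}(L-2l+2) = \frac{(-1)^{l+1}L}{4}(2l-L) = \frac{(-1)^{l+2}L}{4}\big(L-2(l+1)+2\big),
\end{equation*}
which is the asserted formula for $l+1$. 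I expect no real obstacle here; the one point that genuinely needs care is verifying that the cyclic telescoping closes up, and that is exactly where oddness of $L$ is used — both to keep every term finite ($z\neq -1$) and to make the wrap-around coefficient $(-1)^L=-1$, so that the closed loop pins down $b_0$ and $a_0$ rather than being vacuous.
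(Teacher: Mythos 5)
Your proof is correct, and it follows the same route the paper indicates (the paper only remarks that the lemma "can be proved via recursion"): you set up the first-order recurrences $a_{l+1}+a_l=b_l$ and $b_{l+1}+b_l=\sum_{z^L=1}z^l$, close the cyclic system using the oddness of $L$ to pin down $b_0=\tfrac{L}{2}$ and $a_0=-\tfrac{L}{4}(L-2)$, and finish by induction. All the computations check out, including the base cases and the wrap-around step.
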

We may therefore conclude that 
\begin{equation} \label{eqn:gLl}
g_L(l) = \begin{cases}
\frac{1}{4 F_L} (L(L + 4) - 5)& l = 0 \\
\frac{1}{4 F_L}((-1)^l L (L-2l)- 5) & l=1,\dots, L-1.
\end{cases}
\end{equation}
Finally, since \(g_L(l)\) and \(a_l\) are both \(L\)-periodic in \(l\), we may extend periodically to obtain \(g_L(i-j)\) for each \(i,j = 0,\dots, L-1\), as required.


\section{Local behavior on a cycle graph} \label{sec:exlocal}

Continuing with the same setup as in the previous section, we examine tile correlations when restricted to a window of five consecutive vertices of \(\calG\) (assume \(L >> 5\)). Specifically, we refer to a subset of \(\{v_L v_1, v_1 v_2, v_2 v_3, v_3 v_4, v_4 v_5, v_5 v_6\}\) covering each of \(v_1,\dots,v_5\) at most once as a local configuration; there are exactly 21 of them. Define an equivalence relation \(\sim\) on \(T\) where two tiles are equivalent if they use the same subset of edges on \(v_1,\dots, v_5\). The set of local configurations is then \(J = T / \sim\). Extend the quotient map linearly to obtain \(B: \R^T \to \R^J\). We also denote the matrix representing this map by \(B\). 

Define a random variable \(S_j: \Omega_{\vb{n},N} \to \Z_{\geq 0}\) which counts the number of times a tile with local configuration \(j\) occurs in a colored \(\vb{n}\)-multiweb, i.e. \(S_j(m) = \sum_{t\in j} m_t\).
The distribution of \(S_j\) will be determined by two parameters: let \(f_j\) be the number of edges in configuration \(j\) and let \(\varepsilon_j\) be how many of the endmost edges \(v_L v_1\), \(v_5 v_6\) are used. 
The random vector \(\vb{S} = (S_j)_j\) is a linear transformation of the random vector of tile multiplicities: \(\vb{S} = B\vb{X}\), and in particular \(S_j = \sum_{t\in j} X_t\). The cardinality of the equivalence class \(j\) is the number of partial matchings on the line graph of length \(L-\varepsilon_j-5\), which is \(F_{L-\varepsilon_j - 4}\) by Corollary \ref{cor:numtiles}.
Therefore from (\ref{eqn:Xtmeancov}) we conclude that when \(n_v\) and \(N\) are large, \(\vb{S}\) is a multidimensional Gaussian with
\begin{equation} \label{eqn:Sjmeancov}
\Expe(S_j) = \frac{N}{\abs{T}}F_{L-\varepsilon_j-4} 
\qand 
\Cov(\vb{S}) = \frac{N}{\abs{T}} \pqty{B B^\top - B D^\top (D D^\top)^{-1} D B^\top }.
\end{equation}
See Figure \ref{fig:L=31covS} for an example of this covariance matrix. 

\subsection{Limiting local tile distribution} \label{subsec:limlocal}

It turns out that the mean and variance of each \(S_j\) approach some limit as the (odd) size \(L\) of the graph tends to infinity. 
We have
\[\lim_{L\to \infty} \Expe(S_j) =\frac{N \varphi^{-\varepsilon_j-4}}{\sqrt{5} }
\qand
\lim_{L\to \infty} \Var(S_j) = \frac{N \varphi^{-\varepsilon_j-4}}{\sqrt{5} } \pqty{1 - \frac{ \varepsilon_j + \varphi^{-1} f_j + \frac{11+\sqrt{5}}{2} }{\varphi^{\varepsilon_j + 6}}} .\]
From (\ref{eqn:Sjmeancov}), we can take the limit of the mean directly. 
For the variance, one can for example obtain an expanded expression for \(e_j^\top \Cov(\vb{S}) e_j\) via computer using the explicit descriptions of \(B\) and \(D\), and of \(\Delta^{-1}\) in (\ref{eqn:LxLsubmatrix}), (\ref{eqn:invDelta}), (\ref{eqn:gLl}). Then, manually identifying the terms which dominate asymptotically gives the result.

\bibliographystyle{plain}
\bibliography{partialnfold}

\end{document}